\newtheorem{theorem}{Theorem}[section]
\newtheorem{proposition}[theorem]{Proposition}
\theoremstyle{definition}
\newtheorem{definition}[theorem]{Definition}
\newtheorem{example}[theorem]{Example}
\begin{document}

\title{Mixing unilateral backward shifts}

\author{Zhen Rong}

\address{\hskip-\parindent
Z.R., College of statistics and mathematics, Inner Mongolia University of Finance and Economics,
Hohhot 010000, China.}
\email{rongzhenboshi@sina.com}

\date{June 8, 2023}

\subjclass[2010]{46A08; 47A16; 47A35}
\keywords{Mixing operators; barrelled spaces; ultrabarrelled spaces; topological sequence spaces; unilateral backward shifts.}

\begin{abstract}
In this paper, we characterize mixing unilateral backward shifts on barrelled and ultrabarrelled topological sequence spaces respectively, which extend several well-known results in the existing literature. We present some nontrivial examples to show the validity of our results.
\end{abstract}

\maketitle

%%%%%%%%%%%%%%%%%%%%%%%%%%%%%%%%%%%%%%%%%%%%%%%%%%%%%%%%%%%%%%%%%%%%%%%%%%%%%%%%%%%%%%%%%%%%%%%%%%%%%%%%%%%%%%%%%%%%%%%%%
\section{Introduction} \label{S-introduction}
Throughout this article, let $\mathbb{N}$ denote the set of nonnegative integers and let $\mathbb{N}^{\ast}$ denote the set of positive integers. Let $\mathbb{Z}$ denote the set of integers. Let $\mathbb{K}$ denote the real number field $\mathbb{R}$ or the complex number field $\mathbb{C}$. Let $\mathbb{K}^{\mathbb{N}^{\ast}}$ denote the set of maps from $\mathbb{N}^{\ast}$ into $\mathbb{K}$. For each $n\in\mathbb{N}^{\ast}$, let $e_{n}$ denote the sequence that has 1 as its $n^{th}$ term and all other terms 0.

A subset $A$ of a vector space $X$ is called {\it convex} if $sA+tA\subseteq A$ for $0\leqslant t\leqslant1,s+t=1$; {\it balanced} if $tA\subseteq A$ for $|t|\leqslant1$; {\it absolutely convex} if $A$ is both balanced and convex; {\it absorbing} if for every $x\in X$ there exists $\varepsilon>0$ such that $tx\in A$ for $|t|<\varepsilon$.

Suppose that $X$ is a vector space over the scalar field $\mathbb{K}=\mathbb{R}$ or $\mathbb{C}$ with a topology $\mathfrak{T}$ such that the addition operation and scalar multiplication operation are continuous. Then the ordered pair $(X,\mathfrak{T})$ is called a {\it topological vector space} and the topology $\mathfrak{T}$ is called a {\it vector topology} for $X$.

A topological vector space $X$ is called a {\it locally convex space} if every neighborhood of 0 contains a convex neighborhood of 0; it is called an {\it F-space} if the topology of $X$ can be induced by some complete metric; and it is called a {\it Fr$\acute{e}$chet space} if $X$ is a locally convex F-space. A topological vector space $X$ is called a {\it topological sequence space} if $X$ is a linear subspace of $\mathbb{K}^{\mathbb{N}^{\ast}}$ and the convergence in $X$ implies coordinatewise convergence.

A subset $A$ of a topological vector space $X$ is called a {\it barrel} if $A$ is an absolutely convex absorbing closed set. A {\it barrelled} space is a locally convex space in which every barrel is a neighborhood of 0. It is known that a locally convex space which is of second category is barrelled (see \cite[page 136, Example 9.3.2]{Wilansky78}). In particular, every Fr$\acute{e}$chet space is barrelled.

Let $\mathfrak{T}, \mathfrak{T}^{'}$ be vector topologies for a vector space $X$. We say $\mathfrak{T}^{'}$ is $F$ {\it linked to} $\mathfrak{T}$ if $\mathfrak{T}^{'}$ has a local base of neighborhoods of 0, each of which is $\mathfrak{T}$ closed. A topological vector space $(X,\mathfrak{T})$ is called {\it ultrabarrelled} if $\mathfrak{T}$ is larger than every vector topology which is $F$ linked to $\mathfrak{T}$. It is known that every second-category topological vector space is ultrabarrelled (see \cite[page 10]{Waelbroeck}). In particular, every F-space is ultrabarrelled. In addition, there exists a barrelled space which is not ultrabarrelled. Also there exists an ultrabarrelled space which is not barrelled.

A sequence $(\mu_{n})_{n=1}^{\infty}$ in a topological vector space $X$ is called a {\it basis} if every $\mu\in X$ has a unique representation
$$\mu=\sum_{n=1}^{\infty}\alpha_{n}\mu_{n}$$
with scalars $\alpha_{n}\in\mathbb{K},n\geqslant1$.

A continuous linear operator $S$ on a topological vector space $X$ is called {\it hypercyclic} if there is an element $\mu$ in $X$ whose orbit $\{S^{n}\mu:n\in\mathbb{N}\}$ is dense in $X$; and {\it mixing} if for any pair $U,V$ of nonempty open subsets of $X$, there exists some nonnegative integer $N$ such that $S^{n}(U)\cap V\neq\emptyset$ for all $n\geqslant N$.

The historical interest in hypercyclicity is related to the invariant subset problem. The invariant subset problem, which is open to this day, asks whether every continuous linear operator on any infinite dimensional separable Hilbert space possesses an invariant closed subset other than the trivial ones given by $\{0\}$ and the whole space. After a simple observation, a continuous linear operator $S$ on a Banach space $X$ has no nontrival invariant closed subsets if and only if every nonzero vector $\mu$ is hypercyclic (i.e. the orbit $\{S^{n}\mu:n\in\mathbb{N}\}$ under $S$ is dense in $X$).

The best known examples of hypercyclic operators are due to Birkhoff \cite{Birkhoff}, MacLane \cite{MacLane} and Rolewicz \cite{Rolewicz}. Birkhoff \cite{Birkhoff} proved that the translation operator on the space $H(\mathbb{C})$ of entire functions is hypercyclic. MacLane \cite{MacLane} proved that the differentiation operator on $H(\mathbb{C})$ is hypercyclic. Hypercyclic shift operators on $l^{p}(1\leqslant p<+\infty)$ or $c_{0}$ were first studied by Rolewicz \cite{Rolewicz}. Since then the hypercyclic, mixing, weakly mixing and chaotic shift operators have been extensively studied (see \cite{Beauzamy,Bernal-Gonzalez,Bes-Peris,Birkhoff,Chan,Costakis-Sambarino,Gethner-Shapiro,
Godefroy-Shapiro,Grosse-Erdmann,Grosse-Erdmann-Peris,Gulisashvili-MacCluer,Kitai,MacLane,Martinez-Peris,Mathew,Rolewicz,Salas91,Salas95,Rong}).

Shift operators play an important role during the study of linear dynamical systems. Shift operators are of interest because many classical operators (such as the differentiation operator) can be viewed as such operators.

Costakis and Sambarino \cite{Costakis-Sambarino} characterized mixing unilateral and bilateral weighted backward shifts on $l^{2}$ and $l^{2}(\mathbb{Z})$. Recently Grosse-Erdmann and Peris \cite[pages 89-104]{Grosse-Erdmann-Peris} characterized mixing unilateral and bilateral weighted backward shifts on Fr$\acute{e}$chet sequence spaces.

In the case of mixing unweighted unilateral backward shifts on Fr$\acute{e}$chet sequence spaces, Grosse-Erdmann and Peris \cite[Theorem 4.5]{Grosse-Erdmann-Peris} proved the following results:

\begin{theorem}[\cite{Grosse-Erdmann-Peris}]
Let $X$ be a Fr$\acute{e}$chet sequence space in which $(e_{n})_{n=1}^{\infty}$ form a basis. Suppose that the unilateral unweighted backward shift $T$ is a linear operator on $X$, where
$$T(u_{1},u_{2},u_{3},\cdots)=(u_{2},u_{3},u_{4},\cdots).$$
Then $T$ is mixing if and only if $\lim\limits_{n\rightarrow\infty}e_{n}=0$.
\end{theorem}

In the case of mixing weighted unilateral backward shifts on Fr$\acute{e}$chet sequence spaces, Grosse-Erdmann and Peris \cite[Theorem 4.8 (b)]{Grosse-Erdmann-Peris} proved the following results:

\begin{theorem}[\cite{Grosse-Erdmann-Peris}]
Let $X$ be a Fr$\acute{e}$chet sequence space in which $(e_{n})_{n=1}^{\infty}$ form a basis. Let $\nu=(v_{n})_{n=1}^{\infty}$ be a sequence of nonzero scalars. Suppose that the unilateral weighted backward shift $T_{\nu}$ is a linear operator on $X$, where
$$T_{\nu}(u_{1},u_{2},u_{3},\cdots)=(v_{2}u_{2},v_{3}u_{3},v_{4}u_{4},\cdots).$$
Then $T_{\nu}$ is mixing if and only if $\lim\limits_{n\rightarrow\infty}(\prod\limits_{i=1}^{n}v_{i})^{-1}e_{n}=0$.
\end{theorem}

In this paper we will characterize mixing unilateral unweighted backward shifts on barrelled and ultrabarrelled topological sequence spaces respectively. Then we extend the corresponding results immediately to all weighted unilateral backward shifts on barrelled and ultrabarrelled topological sequence spaces via a simple conjugacy.

In the case of mixing unweighted unilateral backward shifts on barrelled topological sequence spaces, we prove the following results:

\begin{theorem}
Let $X$ be a barrelled topological sequence space in which $(e_{n})_{n=1}^{\infty}$ form a basis. Suppose that the unweighted unilateral backward shift $T$ is a continuous linear operator on $X$, where
$$T(u_{1},u_{2},u_{3},\cdots)=(u_{2},u_{3},u_{4},\cdots).$$
Then $T$ is mixing if and only if $\lim\limits_{n\rightarrow\infty}e_{n}=0$.
\end{theorem}

Grosse-Erdmann and Peris \cite[Theorem 4.5]{Grosse-Erdmann-Peris} proved Theorem 1.3 in the case of Fr$\acute{e}$chet sequence spaces, thus Theorem 1.3 generalizes Theorem 1.1.

In the case of mixing weighted unilateral backward shifts on barrelled topological sequence spaces, we prove the following results:

\begin{theorem}
Let $X$ be a barrelled topological sequence space in which $(e_{n})_{n=1}^{\infty}$ form a basis. Let $\nu=(v_{n})_{n=1}^{\infty}$ be a sequence of nonzero scalars. Suppose that the unilateral weighted backward shift $T_{\nu}$ is a continuous linear operator on $X$, where
$$T_{\nu}(u_{1},u_{2},u_{3},\cdots)=(v_{2}u_{2},v_{3}u_{3},v_{4}u_{4},\cdots).$$
Then $T_{\nu}$ is mixing if and only if $\lim\limits_{n\rightarrow\infty}(\prod\limits_{i=1}^{n}v_{i})^{-1}e_{n}=0$.
\end{theorem}

Grosse-Erdmann and Peris \cite[Theorem 4.8 (b)]{Grosse-Erdmann-Peris} proved Theorem 1.4 in the case of Fr$\acute{e}$chet sequence spaces, thus Theorem 1.4 generalizes Theorem 1.2.

In the case of mixing unweighted unilateral backward shifts on ultrabarrelled topological sequence spaces, we prove the following results:

\begin{theorem}
Let $X$ be an ultrabarrelled topological sequence space in which $(e_{n})_{n=1}^{\infty}$ form a basis. Suppose that the unweighted unilateral backward shift $T$ is a continuous linear operator on $X$, where
$$T(u_{1},u_{2},u_{3},\cdots)=(u_{2},u_{3},u_{4},\cdots).$$
Then $T$ is mixing if and only if $\lim\limits_{n\rightarrow\infty}e_{n}=0$.
\end{theorem}

Grosse-Erdmann and Peris \cite[Theorem 4.5]{Grosse-Erdmann-Peris} proved Theorem 1.5 in the case of Fr$\acute{e}$chet sequence spaces, thus Theorem 1.5 generalizes Theorem 1.1.

In the case of mixing weighted unilateral backward shifts on ultrabarrelled topological sequence spaces, we prove the following results:

\begin{theorem}
Let $X$ be an ultrabarrelled topological sequence space in which $(e_{n})_{n=1}^{\infty}$ form a basis. Let $\nu=(v_{n})_{n=1}^{\infty}$ be a sequence of nonzero scalars. Suppose that the unilateral weighted backward shift $T_{\nu}$ is a continuous linear operator on $X$, where
$$T_{\nu}(u_{1},u_{2},u_{3},\cdots)=(v_{2}u_{2},v_{3}u_{3},v_{4}u_{4},\cdots).$$
Then $T_{\nu}$ is mixing if and only if $\lim\limits_{n\rightarrow\infty}(\prod\limits_{i=1}^{n}v_{i})^{-1}e_{n}=0$.
\end{theorem}

Grosse-Erdmann and Peris \cite[Theorem 4.8 (b)]{Grosse-Erdmann-Peris} proved Theorem 1.6 in the case of Fr$\acute{e}$chet sequence spaces, thus Theorem 1.6 generalizes Theorem 1.2.

This paper is organized as follows. In Section~\ref{S-mixing barrelled} we characterize mixing unilateral backward shifts on barrelled topological sequence spaces. Furthermore, we exhibit several mixing and non-mixing unilateral backward shifts on some non-metrizable topological sequence spaces in Section 2. These examples show that our results are valid. In Section~\ref{S-mixing ultrabarrelled} we characterize mixing unilateral backward shifts on ultrabarrelled topological sequence spaces. In addition, we exhibit one mixing unilateral backward shift on an ultrabarrelled but not barrelled topological sequence space in Section 3. This example shows that our results are effective.

\noindent{\it Acknowledgments.}
Z.~R. was supported by National Natural Science Foundation of China (Grant No.12261063).

\section{In the case of barrelled spaces} \label{S-mixing barrelled}
In this section we will characterize mixing unilateral backward shifts on barrelled topological sequence spaces.

Recall the following notions introduced in \cite{Wilansky78}.

\begin{definition}
A subset $A$ of a topological vector space $X$ is called {\it bounded} if for every neighborhood $U$ of 0 there exists $\varepsilon>0$ such that $tA\subseteq U$ whenever $|t|<\varepsilon$.

It is known that every convergent sequence in a topological vector space is bounded (see \cite[page 49, Problem 4.4.10]{Wilansky78}).

Let $X,Y$ be two topological vector spaces, $\mathfrak{F}$ a set of linear maps from $X$ to $Y$. Then $\mathfrak{F}$
is called {\it equicontinuous} if for each neighborhood $V$ of 0 in $Y$ there exists a neighborhood $U$ of 0 in $X$ such that $f(U)\subseteq V$
for each $f\in\mathfrak{F}$.
\end{definition}

We need the following Banach-Steinhaus Theorem, see \cite[page 137, Theorem 9.3.4]{Wilansky78}.

\begin{theorem}[\cite{Wilansky78}]
If $\mathfrak{F}$ is a collection of continuous linear maps from a barrelled space $X$ into a locally convex space $Y$, and if the set
$$\mathfrak{F}(\mu)=\{f(\mu):f\in\mathfrak{F}\}$$
is bounded in $Y$ for every $\mu\in X$, then $\mathfrak{F}$ is equicontinuous.
\end{theorem}

%\begin{proof}
%Let $V$ be a neighborhood of 0 in $Y$. We will show that there exists a neighborhood $U$ of 0 in $X$ such that $f(U)\subseteq V$
%for each $f\in\mathfrak{F}$. Since every locally convex space has a local base of neighborhoods of 0 which are barrels (see \cite[page 92, Theorem 7.1.2]{Wilansky78}), there exists a neighborhood $W$ of 0 in $Y$ such that $W$ is a barrel and $W\subseteq V$. Next we will show that there exists a neighborhood $U$ of 0 in $X$ such that $f(U)\subseteq W$
%for each $f\in\mathfrak{F}$. Let $U=\bigcap\limits_{f\in\mathfrak{F}}f^{-1}(W)$. Then $f(U)\subseteq W$ for each $f\in\mathfrak{F}$. We claim that $U$ is a barrel in $X$. Since each $f\in\mathfrak{F}$ is continuous and $W$ is closed, $U$ is closed. Since $W$ is absolutely convex, $U$ is absolutely convex. For every $\mu\in X$, since the set $\mathfrak{F}(\mu)=\{f(\mu):f\in\mathfrak{F}\}$ is bounded in $Y$, there exists $\varepsilon>0$ such that $t\mathfrak{F}(\mu)\subseteq W$ whenever $|t|<\varepsilon$. Hence $t\mu\in U$ for $|t|<\varepsilon$, this implies that $U$ is absorbing. Therefore $U$ is a barrel in $X$. Since $X$ is a barrelled space, every barrel in $X$ is a neighborhood of 0. Thus $U$ is a neighborhood of 0 in $X$. So $\mathfrak{F}$ is equicontinuous.
%\end{proof}

We start by studying the unweighted unilateral backward shifts on barrelled topological sequence spaces. Our results will then extend immediately to all weighted unilateral backward shifts on barrelled topological sequence spaces via a simple conjugacy.

The following theorem is an important criterion for mixing operators in linear dynamical systems which is called Kitai's Criterion, see \cite[page 345, Theorem 12.31]{Grosse-Erdmann-Peris} and \cite{Kitai}.

\begin{theorem}[\cite{Grosse-Erdmann-Peris,Kitai}]
Let $S$ be a continuous linear operator on a topological vector space $X$. If there are dense subsets $X_{0},Y_{0}\subseteq X$ and a map $F:Y_{0}\rightarrow Y_{0}$
such that, for any $\mu\in X_{0},\nu\in Y_{0}$,
\begin{enumerate}
  \item $\lim\limits_{n\rightarrow\infty}S^{n}\mu=0$,
  \item $\lim\limits_{n\rightarrow\infty}F^{n}\nu=0$,
  \item $SF\nu=\nu$,
\end{enumerate}
then $S$ is mixing.
\end{theorem}

%\begin{proof}
%Let $U,V$ be a pair of nonempty open subsets of $X$. By hypothesis we can find $\mu\in X_{0}\bigcap U$ and $\nu\in Y_{0}\bigcap V$. Let $\nu_{n}=F^{n}\nu$. Then $\lim\limits_{n\rightarrow\infty}\nu_{n}=0$ as $n\rightarrow\infty$ and $S^{n}(\nu_{n})=S^{n}(F^{n}\nu)=\nu$ for all $n\geqslant0$. Hence there exists some nonnegative integer $N$ such that, for all $n\geqslant N$,
%$$\mu+\nu_{n}\in U$$ and $$S^{n}(\mu+\nu_{n})=S^{n}(\mu)+S^{n}(\nu_{n})=S^{n}(\mu)+\nu\in V.$$
%This implies that $S^{n}(U)\cap V\neq\emptyset$ for all $n\geqslant N$. Hence $S$ is mixing.
%\end{proof}

\begin{proposition}
Let $X$ be a topological sequence space in which $(e_{n})_{n=1}^{\infty}$ form a basis. Suppose that the unweighted unilateral backward shift $T$ is a continuous linear operator on $X$, where
$$T(u_{1},u_{2},u_{3},\cdots)=(u_{2},u_{3},u_{4},\cdots).$$
Suppose that $\lim\limits_{n\rightarrow\infty}e_{n}=0$. Then $T$ is mixing.
\end{proposition}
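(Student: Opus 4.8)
The plan is to verify the mixing property directly from its definition, exploiting the fact that the basis expansion makes the finitely supported vectors dense in $X$. Fix two nonempty open sets $U,V\subseteq X$. Since every $x\in X$ is the limit of the partial sums $\sum_{k=1}^{N}\alpha_k e_k$ of its basis expansion, the finitely supported vectors are dense, so I may choose finitely supported $u\in U$ and $v\in V$. Taking $m$ large enough to dominate both supports, I write $u=\sum_{k=1}^{m}a_k e_k$ and $v=\sum_{k=1}^{m}b_k e_k$.

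Next I would build, for each $n\geq m$, a candidate orbit point by perturbing $u$ with a high-index copy of $v$: set $z_n=u+\sum_{k=1}^{m}b_k e_{n+k}$. The point of this choice is that $B$ acts on the canonical vectors by $Be_j=e_{j-1}$ for $j\geq 2$ and $Be_1=0$, so that $B^n e_{n+k}=e_k$ while $B^n e_k=0$ whenever $k\leq m\leq n$. Consequently $B^n z_n=B^n u+\sum_{k=1}^{m}b_k B^n e_{n+k}=0+v=v\in V$, with the two summands cleanly separated because the support of $u$ lies in $\{1,\dots,m\}$ and is annihilated by $B^n$ once $n\geq m$. Note that this computation uses only that $B$ is the backward shift, not its continuity.

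It remains to show that $z_n\in U$ for all large $n$, and this is where the hypothesis $e_n\to 0$ enters decisively; indeed it is the only place the assumption is used, and I expect it to be the crux of the argument. For each fixed $k$ the sequence $(e_{n+k})_n$ is a shift of $(e_n)_n$ and hence tends to $0$; since scalar multiplication and finite addition are continuous in a topological vector space, the perturbation $z_n-u=\sum_{k=1}^{m}b_k e_{n+k}\to 0$ as $n\to\infty$. Because $U$ is open and $u\in U$, there is a neighborhood $W$ of $0$ with $u+W\subseteq U$, and the perturbation lies in $W$ for all $n$ beyond some $N\geq m$; thus $z_n\in U$ for every $n\geq N$.

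Combining these, for every $n\geq N$ we have $z_n\in U$ and $B^n z_n=v\in V$, so $B^n(U)\cap V$ contains $v$ and is therefore nonempty. As $N$ depends only on $U$ and $V$, this is exactly the mixing condition, which completes the proof. The argument is structurally routine once the high-index perturbation is introduced; the only genuine ingredient beyond bookkeeping is the convergence $e_{n+k}\to 0$, which converts the algebraic identity $B^n z_n=v$ into the topological statement $z_n\to u\in U$.
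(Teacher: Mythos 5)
Your proof is correct and is essentially the paper's own argument: the proof the paper omits (referring to \cite[Theorem 4.5]{Grosse-Erdmann-Peris}) is exactly this Kitai-type construction, where one picks finitely supported $u\in U$, $v\in V$ and perturbs $u$ by the forward-shifted copy $\sum_{k=1}^{m}b_{k}e_{n+k}$, so that $B^{n}z_{n}=v$ exactly while $e_{n}\rightarrow0$ forces $z_{n}\rightarrow u$. Your observation that the argument uses neither completeness nor metrizability is precisely why the proposition holds for general topological sequence spaces and not just Fr\'echet ones.
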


\begin{proof}
We apply Theorem 2.3. Since $(e_{n})_{n=1}^{\infty}$ is a basis and the convergence in $X$ imply coordinatewise convergence, for any $\mu=(u_{n})_{n=1}^{\infty}\in X$ we have $\mu=\sum\limits_{n=1}^{\infty}u_{n}e_{n}$. Hence $span\{e_{n}:n\geqslant1\}$ is dense in $X$, where $span\{e_{n}:n\geqslant1\}$ is the collection of all linear combinations of finite elements of $\{e_{n}:n\geqslant1\}$. Take $X_{0}=Y_{0}=span\{e_{n}:n\geqslant1\}$. For $F:Y_{0}\rightarrow Y_{0}$ we take the unweighted unilateral forward shift $F(u_{1},u_{2},u_{3},\cdots)=(0,u_{1},u_{2},\cdots)$.

(1) For any $\mu\in X_{0}$, when $n$ is sufficiently large we have $T^{n}\mu=0$, then $T^{n}\mu\rightarrow0$ in $X$ as $n\rightarrow\infty$.

(2) For any $\nu\in Y_{0}$, we will show that $\lim\limits_{n\rightarrow\infty}F^{n}\nu=0$.

First we will show that $\lim\limits_{n\rightarrow\infty}F^{n}e_{j}=0$ for each $j\geqslant1$. For any $j\geqslant1$, since $F^{n}e_{j}=e_{j+n}$ and $\lim\limits_{n\rightarrow\infty}e_{n}=0$, $\lim\limits_{n\rightarrow\infty}F^{n}e_{j}=0$.

Next we will show that $\lim\limits_{n\rightarrow\infty}F^{n}\nu=0$ for each $\nu\in Y_{0}$. For any $\nu\in Y_{0}$, there exist a positive integer $m$ and scalars $\alpha_{1},\cdots,\alpha_{m}\in\mathbb{K}$ such that $\nu=\alpha_{1}e_{1}+\cdots+\alpha_{m}e_{m}$. Since for each $1\leqslant j\leqslant m$ we have $\lim\limits_{n\rightarrow\infty}F^{n}e_{j}=0$, $F^{n}\nu=\alpha_{1}F^{n}e_{1}+\cdots+\alpha_{m}F^{n}e_{m}\rightarrow0$ in $X$ as $n\rightarrow\infty$.

(3) For any $\nu\in Y_{0}$, we have $TF\nu=\nu$.

This shows that conditions (1)--(3) of Theorem 2.3 hold, so that $T$ is mixing.
\end{proof}

\begin{example}
The unweighted unilateral backward shift
$$T:(c_{0},\sigma(c_{0},l^{1}))\rightarrow(c_{0},\sigma(c_{0},l^{1}))$$
is mixing. Here $\sigma(c_{0},l^{1})$ is the weak topology of $c_{0}$. For the more details about the weak topology please refer to \cite[page 213]{Megginson}.

It is obvious that $(c_{0},\sigma(c_{0},l^{1}))$ is a topological sequence space in which $(e_{n})_{n=1}^{\infty}$ form a basis. Since $(c_{0},\|\cdot\|_{\infty})$ is an infinite dimensional Banach space, the weak topology of $c_{0}$ can not be metrizable (see \cite[page 215, Proposition 2.5.14]{Megginson}). Hence $(c_{0},\sigma(c_{0},l^{1}))$ is not a Fr$\acute{e}$chet sequence space. Clearly the unweighted unilateral backward shift $$T:(c_{0},\sigma(c_{0},l^{1}))\rightarrow(c_{0},\sigma(c_{0},l^{1}))$$
is continuous. Finally we claim that $e_{n}\rightarrow0$ in $(c_{0},\sigma(c_{0},l^{1}))$ as $n\rightarrow\infty$. For each $(\beta_{k})_{k=1}^{\infty}\in l^{1}$, we have $\lim\limits_{n\rightarrow\infty}\beta_{n}=0$. This implies that $e_{n}\rightarrow0$ in $(c_{0},\sigma(c_{0},l^{1}))$ as $n\rightarrow\infty$. By Proposition 2.4 we have the unweighted unilateral backward shift
$$T:(c_{0},\sigma(c_{0},l^{1}))\rightarrow(c_{0},\sigma(c_{0},l^{1}))$$
is mixing.

\end{example}

\begin{flushright}
  $\Box$
\end{flushright}

\begin{example}
The unweighted unilateral backward shift
$$T:(l^{\infty},\sigma(l^{\infty},l^{1}))\rightarrow(l^{\infty},\sigma(l^{\infty},l^{1}))$$
is mixing. Here $\sigma(l^{\infty},l^{1})$ is the weak$^{\ast}$ topology of $l^{\infty}$. For the more details about the weak$^{\ast}$ topology please refer to \cite[page 224]{Megginson}.

It is obvious that $(l^{\infty},\sigma(l^{\infty},l^{1}))$ is a topological sequence space in which $(e_{n})_{n=1}^{\infty}$ form a basis.
Since $(l^{1},\|\cdot\|_{1})$ is an infinite dimensional Banach space, the weak$^{\ast}$ topology of $(l^{1})^{\ast}$ can not be metrizable (see \cite[page 226, Proposition 2.6.12]{Megginson}). Hence $\sigma(l^{\infty},l^{1})$ can not be metrizable. Therefore $(l^{\infty},\sigma(l^{\infty},l^{1}))$ is not a Fr$\acute{e}$chet sequence space. Clearly the unweighted unilateral backward shift
$$T:(l^{\infty},\sigma(l^{\infty},l^{1}))\rightarrow(l^{\infty},\sigma(l^{\infty},l^{1}))$$
is continuous. Finally we claim that $e_{n}\rightarrow0$ in $(l^{\infty},\sigma(l^{\infty},l^{1}))$ as $n\rightarrow\infty$. For each $(\beta_{k})_{k=1}^{\infty}\in l^{1}$, we have $\lim\limits_{n\rightarrow\infty}\beta_{n}=0$. This implies that $e_{n}\rightarrow0$ in $(l^{\infty},\sigma(l^{\infty},l^{1}))$ as $n\rightarrow\infty$. By Proposition 2.4 we have the unweighted unilateral backward shift
$$T:(l^{\infty},\sigma(l^{\infty},l^{1}))\rightarrow(l^{\infty},\sigma(l^{\infty},l^{1}))$$
is mixing.
\end{example}

\begin{flushright}
  $\Box$
\end{flushright}

\begin{proposition}
Let $X$ be a barrelled topological sequence space in which $(e_{n})_{n=1}^{\infty}$ form a basis. Suppose that the unweighted unilateral backward shift $T$ is a mixing linear operator on $X$, where
$$T(u_{1},u_{2},u_{3},\cdots)=(u_{2},u_{3},u_{4},\cdots).$$
Then $\lim\limits_{n\rightarrow\infty}e_{n}=0$.
\end{proposition}

\begin{proof}
It suffices to show that for any neighborhood $U$ of $0$ in $X$, there is a positive integer $N$ such that $e_{n}\in U$ whenever $n>N$.

Since $(e_{k})_{k=1}^{\infty}$ is a basis and the convergence in $X$ imply coordinatewise convergence, for any $\mu=(u_{k})_{k=1}^{\infty}\in X$, the representation $\mu=\sum\limits_{k=1}^{\infty}u_{k}e_{k}$ converges in $X$. Hence for any $\mu=(u_{k})_{k=1}^{\infty}\in X$, the sequence $(u_{k}e_{k})_{k=1}^{\infty}$ converges to 0 in $X$.

Since $X$ is a topological vector space, the addition operation is continuous. Hence there is a neighborhood $V_{1}$ of 0 such that $V_{1}+V_{1}\subseteq U$, where $V_{1}+V_{1}=\{x+y:x,y\in V_{1}\}$. Since every neighborhood of 0 in a topological vector space includes a balanced neighborhood of 0 (see \cite[page 41, Lemma 4.2.5]{Wilansky78}), we may choose a balanced neighborhood $V$ of 0 such that $V\subseteq V_{1}$. Then $V+V\subseteq U$.

For each positive integer $n$, we define a continuous linear operator $f_{n}$ on $X$ by $f_{n}(\mu)=u_{n}e_{n}$, where $\mu=(u_{k})_{k=1}^{\infty}\in X$. Since the sequence $(u_{k}e_{k})_{k=1}^{\infty}$ converges to 0 in $X$, for each $\mu\in X$, $\{f_{n}(\mu):n\geqslant1\}$ is a bounded set in $X$. By Theorem 2.2, applied to the continuous linear operators $f_{n},n\geqslant1$, there is some open neighborhood $W$ of 0 such that, for all $\mu=(u_{k})_{k=1}^{\infty}\in X$,
\begin{align*}
\mu\in W\Rightarrow u_{n}e_{n}\in V
\end{align*}
for all $n\geqslant1$.

Moreover, since the convergence in $X$ imply coordinatewise convergence, there is some open neighborhood $Q$ of 0 such that, for all $\mu=(u_{k})_{k=1}^{\infty}\in X$,
\begin{align*}
\mu\in Q\Rightarrow|u_{1}|\leqslant\frac{1}{2}.
\end{align*}

Since $T$ is mixing, there is some positive integer $N$ such that $T^{n-1}(W)\bigcap(e_{1}+Q)\neq\emptyset$ whenever $n>N$.

We will show that $e_{n}\in U$ whenever $n>N$.

For each $n>N$, we may choose $\mu_{n}\in W$ with $T^{n-1}(\mu_{n})\in e_{1}+Q$. Take $\mu_{n}=(u_{n,k})_{k=1}^{\infty}$. Then $T^{n-1}(\mu_{n})=(u_{n,n},u_{n,n+1},\cdots)$.
Since $\mu_{n}\in W$ and $T^{n-1}(\mu_{n})-e_{1}\in Q$, we have $u_{n,n}e_{n}\in V$ and $|u_{n,n}-1|\leqslant\frac{1}{2}$. Since $|u_{n,n}-1|\leqslant\frac{1}{2}$, we have $|u_{n,n}|\geqslant\frac{1}{2}$ and
$$|u_{n,n}^{-1}-1|=|\frac{1-u_{n,n}}{u_{n,n}}|\leqslant1.$$
Hence
$$e_{n}=(u_{n,n}^{-1}-1)u_{n,n}e_{n}+u_{n,n}e_{n}\in(u_{n,n}^{-1}-1)V+V.$$
Since $V$ is balanced, $(u_{n,n}^{-1}-1)V\subseteq V$. Therefore $e_{n}\in V+V$. Notice that $V+V\subseteq U$, we have $e_{n}\in U$ whenever $n>N$.
\end{proof}

Using Propositions 2.4 and 2.7 we immediately get Theorem 1.3.

Grosse-Erdmann and Peris \cite[Theorem 4.5]{Grosse-Erdmann-Peris} proved Theorem 1.3 in the case of Fr$\acute{e}$chet sequence spaces, thus Theorem 1.3 generalizes Theorem 1.1.

\begin{example}
The space $\varphi$ is defined to consist of all sequences $\mu=(u_{k})_{k=1}^{\infty}$ of scalars such that $\{k\in\mathbb{N}^{\ast}:u_{k}\neq0\}$ is a finite subset of $\mathbb{N}^{\ast}$. Let $\mathfrak{B}$ be the collection of absolutely convex absorbing sets in $\varphi$. Then there is a unique locally convex topology for $\varphi$ having $\mathfrak{B}$ as a neighborhood base at 0. We denote this unique locally convex topology by $\mathfrak{T}$ and call it the {\it largest locally convex topology} for $\varphi$. For the more details about the largest locally convex topology please refer to \cite[page 93, Problems 7.1.5, 7.1.7, 7.1.8]{Wilansky78}.

In this example we will show that the unweighted unilateral backward shift
$$T:(\varphi, \mathfrak{T})\rightarrow(\varphi, \mathfrak{T})$$
is not mixing.

It is obvious that $(\varphi, \mathfrak{T})$ is a barrelled topological sequence space in which $(e_{n})_{n=1}^{\infty}$ form a basis. Since $\varphi$ is an infinite-dimensional vector space, $(\varphi, \mathfrak{T})$ cannot be metrizable (see \cite[page 93, Problem 7.1.8]{Wilansky78}). Hence $(\varphi, \mathfrak{T})$ is not a Fr$\acute{e}$chet sequence space. Since every linear map from $(\varphi, \mathfrak{T})$ to a locally convex space is continuous (see \cite[page 93, Problem 7.1.7]{Wilansky78}), the unweighted unilateral backward shift
$$T:(\varphi, \mathfrak{T})\rightarrow(\varphi, \mathfrak{T})$$
is continuous. Finally we claim that the sequence $\{e_{n}\}_{n=1}^{\infty}$ does not converge to 0 in $(\varphi, \mathfrak{T})$. Suppose that $e_{n}\rightarrow0$ in $(\varphi, \mathfrak{T})$ as $n\rightarrow\infty$. Then for any locally convex topology $\tau$ for $\varphi$, we have $e_{n}\rightarrow0$ in $(\varphi, \tau)$ as $n\rightarrow\infty$. In particular, $e_{n}\rightarrow0$ in $(\varphi, \|\cdot\|)$ as $n\rightarrow\infty$, where the norm $\|\cdot\|$ in $\varphi$ is defined by
$$\|\mu\|=\sup\limits_{k\geqslant1}|u_{k}| (\forall\mu=(u_{k})_{k=1}^{\infty}\in\varphi).$$
While in fact $\|e_{n}\|=1$, $\{e_{n}\}_{n=1}^{\infty}$ does not converge to 0 in $(\varphi, \|\cdot\|)$. This is a contradiction. Thus $\{e_{n}\}_{n=1}^{\infty}$ does not converge to 0 in $(\varphi, \mathfrak{T})$. By Theorem 1.3 we have the unweighted unilateral backward shift
$$T:(\varphi, \mathfrak{T})\rightarrow(\varphi, \mathfrak{T})$$
is not mixing.
\end{example}

\begin{flushright}
  $\Box$
\end{flushright}

It is now an easy matter to transfer our results so far to arbitrary weighted unilateral backward shifts by means of a suitable conjugacy.

Let $\nu=(v_{n})_{n=1}^{\infty}$ be a sequence of nonzero scalars. Let $(X,\mathfrak{T})$ be a topological sequence space and $T_{\nu}$ be the unilateral weighted backward shift on $(X,\mathfrak{T})$, where
$$T_{\nu}(u_{1},u_{2},u_{3},\cdots)=(v_{2}u_{2},v_{3}u_{3},v_{4}u_{4},\cdots).$$
We define new weights $w_{n}$ by
$$w_{n}=(\prod_{i=1}^{n}v_{i})^{-1},n\geqslant1,$$
and consider the sequence space
$$X_{\omega}=\{(u_{n})_{n=1}^{\infty}\in\mathbb{K}^{\mathbb{N}^{\ast}}:(u_{n}w_{n})_{n=1}^{\infty}\in X\}.$$

Clearly the unweighted unilateral backward shift $T$ is a linear operator on $X_{\omega}$. In fact, if $(u_{n})_{n=1}^{\infty}\in X_{\omega}$, then $(u_{n}w_{n})_{n=1}^{\infty}\in X$. Since $T_{\nu}$ is a linear operator on $X$, $(v_{n+1}u_{n+1}w_{n+1})_{n=1}^{\infty}\in X$. Notice that $v_{n+1}u_{n+1}w_{n+1}=u_{n+1}w_{n}$, then $(u_{n+1}w_{n})_{n=1}^{\infty}\in X$. Therefore $(u_{n+1})_{n=1}^{\infty}\in X_{\omega}$. This implies that the unweighted unilateral backward shift $T$ is a linear operator on $X_{\omega}$.

The map $\phi_{\omega}:X_{\omega}\rightarrow X,(u_{n})_{n=1}^{\infty}\mapsto(u_{n}w_{n})_{n=1}^{\infty}$ is a vector space isomorphism. We may use $\phi_{\omega}$ to transfer a topology from $X$ to $X_{\omega}$. Take $\mathfrak{T}_{\omega}=\{U\subseteq X_{\omega}:\phi_{\omega}(U)\in\mathfrak{T}\}$. Clearly $\mathfrak{T}_{\omega}$ is a topology on $X_{\omega}$. Since $(X,\mathfrak{T})$ is a topological sequence space, $(X_{\omega},\mathfrak{T}_{\omega})$ is also a topological sequence space. If $(X,\mathfrak{T})$ is barrelled, then $(X_{\omega},\mathfrak{T}_{\omega})$ is also barrelled. If $(e_{n})_{n=1}^{\infty}$ form a basis for $(X,\mathfrak{T})$, then $(e_{n})_{n=1}^{\infty}$ also form a basis for $(X_{\omega},\mathfrak{T}_{\omega})$.

Finally, a simple calculation shows that $T_{\nu}\circ\phi_{\omega}=\phi_{\omega}\circ T$. Hence $T_{\nu}:(X,\mathfrak{T})\rightarrow(X,\mathfrak{T})$ is continuous if and only if $T:(X_{\omega},\mathfrak{T}_{\omega})\rightarrow(X_{\omega},\mathfrak{T}_{\omega})$ is continuous, and $T_{\nu}:(X,\mathfrak{T})\rightarrow(X,\mathfrak{T})$ is mixing if and only if $T:(X_{\omega},\mathfrak{T}_{\omega})\rightarrow(X_{\omega},\mathfrak{T}_{\omega})$ is mixing. Therefore our previous results immediately yield Theorem 1.4.

$\mathbf{Proof~of~Theorem~1.4.}$

Since $X$ is a barrelled topological sequence space in which $(e_{n})_{n=1}^{\infty}$ form a basis, $X_{\omega}$ is a barrelled topological sequence space in which $(e_{n})_{n=1}^{\infty}$ form a basis. Since the unilateral weighted backward shift $T_{\nu}$ is a continuous linear operator on $X$, the unweighted unilateral backward shift $T$ is a continuous linear operator on $X_{\omega}$.

\begin{align*}
&T_{\nu}:X\rightarrow X\text{ is mixing }\\
&\Longleftrightarrow T:X_{\omega}\rightarrow X_{\omega}\text{ is mixing }\\
&\Longleftrightarrow e_{n}\rightarrow0\text{ in }X_{\omega}\text{ as } n\rightarrow\infty\text{ (By Theorem 1.3)}\\
&\Longleftrightarrow \phi_{\omega}(e_{n})\rightarrow0\text{ in }X\text{ as } n\rightarrow\infty\\
&\Longleftrightarrow (\prod\limits_{i=1}^{n}v_{i})^{-1}e_{n}\rightarrow0\text{ in }X\text{ as } n\rightarrow\infty.
\end{align*}

\begin{flushright}
  $\Box$
\end{flushright}

Grosse-Erdmann and Peris \cite[Theorem 4.8 (b)]{Grosse-Erdmann-Peris} proved Theorem 1.4 in the case of Fr$\acute{e}$chet sequence spaces, thus Theorem 1.4 generalizes Theorem 1.2.

\begin{example}
Let $Y=\bigcup\limits_{n=1}^{\infty}l^{n}$. And
$$\mathfrak{B}=\{V\subseteq Y:V\text{ is absolutely convex and }V\bigcap l^{n}\text{ is a neighborhood of 0 in }(l^{n},\|\cdot\|_{n})$$
$\text{ for each }n\geqslant1\}.$

Then there is a unique locally convex topology for $Y$ having $\mathfrak{B}$ as a neighborhood base at 0. We denote this unique locally convex topology by $\mathfrak{T}$ and call it the {\it inductive limit topology} for $Y$. For the more details about the inductive limit topology please refer to \cite[pages 209-212]{Wilansky78}.

In this example we will show that the unilateral weighted backward shift
$$T_{\nu}:(Y,\mathfrak{T})\rightarrow(Y,\mathfrak{T})$$
is mixing, where $\nu=(v_{k})_{k=1}^{\infty}$ is a bounded sequence of nonzero scalars with
$$\lim\limits_{k\rightarrow\infty}\prod\limits_{i=1}^{k}|v_{i}|=+\infty.$$

We use Theorem 1.4 to show this example.

(1) The space $(Y,\mathfrak{T})$ is a topological sequence space. For each positive integer $i$, we will prove that the coordinate functional $P_{i}:(Y,\mathfrak{T})\rightarrow\mathbb{K}$ defined by $P_{i}\mu=u_{i} (\forall\mu=(u_{k})_{k=1}^{\infty}\in Y)$ is continuous. Since the coordinate functional $P_{i}:(Y,\mathfrak{T})\rightarrow\mathbb{K}$ is continuous if and only if $P_{i}|_{l^{n}}:(l^{n},\|\cdot\|_{n})\rightarrow\mathbb{K}$ is continuous for each $n\geqslant1$ (see \cite[page 210, Theorem 13.1.8]{Wilansky78}), we only have to show that $P_{i}|_{l^{n}}:(l^{n},\|\cdot\|_{n})\rightarrow\mathbb{K}$ is continuous for any $n\geqslant1$. While this is obvious. Hence the convergence in $(Y,\mathfrak{T})$ implies coordinatewise convergence. In addition, $(Y,\mathfrak{T})$ is a locally convex space and $Y$ is a linear subspace of $\mathbb{K}^{\mathbb{N}^{\ast}}$. So $(Y,\mathfrak{T})$ is a topological sequence space.

(2) The space $(Y,\mathfrak{T})$ is not a Fr$\acute{e}$chet sequence space. Since $(Y,\mathfrak{T})$ can not be metrizable (see \cite[page 65, Example 4.1.2]{Wilansky67}), $(Y,\mathfrak{T})$ is not a Fr$\acute{e}$chet sequence space.

(3) The space $(Y,\mathfrak{T})$ is barrelled. Let $B$ be a barrel in $(Y,\mathfrak{T})$. For each $n\in\mathbb{N}^{\ast}$, $B\bigcap l^{n}$ is a barrel in $(l^{n},\|\cdot\|_{n})$. Since each $(l^{n},\|\cdot\|_{n})$ is barrelled, $B\bigcap l^{n}$ is a neighborhood of 0 in $(l^{n},\|\cdot\|_{n})$. Therefore $B$ is a neighborhood of 0 in $(Y,\mathfrak{T})$. Thus $(Y,\mathfrak{T})$ is barrelled.

(4) $(e_{k})_{k=1}^{\infty}$ form a basis for $(Y,\mathfrak{T})$. For each $\mu=(u_{k})_{k=1}^{\infty}\in Y$, we will show that the sequence $(\sum\limits_{i=1}^{k}u_{i}e_{i})_{k=1}^{\infty}$ converges to $\mu$ in $(Y,\mathfrak{T})$. Let $V$ be an absolutely convex subset of $Y$ and $V\bigcap l^{n}$ is a neighborhood of 0 in $(l^{n},\|\cdot\|_{n})$ for each $n\geqslant1$. We will show that there is some positive integer $k_{0}$ such that $\sum\limits_{i=1}^{k}u_{i}e_{i}-\mu\in V$ whenever $k>k_{0}$. Since $\mu\in Y$, there is some positive integer $m$ such that $\mu\in l^{m}$. Notice that $\sum\limits_{i=1}^{k}u_{i}e_{i}\in l^{m}$ and the sequence $(\sum\limits_{i=1}^{k}u_{i}e_{i})_{k=1}^{\infty}$ converges to $\mu$ in $(l^{m},\|\cdot\|_{m})$. Since $V\bigcap l^{m}$ is a neighborhood of 0 in $(l^{m},\|\cdot\|_{m})$, there is some positive integer $k_{0}$ such that $\sum\limits_{i=1}^{k}u_{i}e_{i}-\mu\in V\bigcap l^{m}\subseteq V$ whenever $k>k_{0}$. Thus the sequence $(\sum\limits_{i=1}^{k}u_{i}e_{i})_{k=1}^{\infty}$ converges to $\mu$ in $(Y,\mathfrak{T})$. Since convergence in $(Y,\mathfrak{T})$ imply coordinatewise convergence, every $\mu=(u_{k})_{k=1}^{\infty}\in Y$ has a unique representation $\mu=\sum\limits_{k=1}^{\infty}u_{k}e_{k}$. Hence $(e_{k})_{k=1}^{\infty}$ form a basis for $(Y,\mathfrak{T})$.

(5) The unilateral weighted backward shift
$$T_{\nu}:(Y,\mathfrak{T})\rightarrow(Y,\mathfrak{T})$$
is continuous. Let $n$ be an any positive integer. Clearly $T_{\nu}|_{l^{n}}=i\circ P$, where $P:(l^{n},\|\cdot\|_{n})\rightarrow(l^{n},\|\cdot\|_{n})$ is defined by
$$P(u_{1},u_{2},u_{3},\cdots)=(v_{2}u_{2},v_{3}u_{3},v_{4}u_{4},\cdots),$$
and $i:(l^{n},\|\cdot\|_{n})\rightarrow(Y,\mathfrak{T})$ is the inclusion map defined by
$$i(u_{1},u_{2},u_{3},\cdots)=(u_{1},u_{2},u_{3},\cdots).$$
Since $\sup\limits_{k\geqslant1}|v_{k}|<+\infty$, $P:(l^{n},\|\cdot\|_{n})\rightarrow(l^{n},\|\cdot\|_{n})$ is continuous. Since
$$\mathfrak{B}=\{V\subseteq Y:V\text{ is absolutely convex and }V\bigcap l^{n}\text{ is a neighborhood of 0 in }(l^{n},\|\cdot\|_{n})$$
$\text{ for each }n\geqslant1\}$
is a neighborhood base at 0 for $(Y,\mathfrak{T})$, the inclusion map $i:(l^{n},\|\cdot\|_{n})\rightarrow(Y,\mathfrak{T})$ is continuous. Thus $T_{\nu}|_{l^{n}}:(l^{n},\|\cdot\|_{n})\rightarrow(Y,\mathfrak{T})$ is continuous. By \cite[page 210, Theorem 13.1.8]{Wilansky78},
$$T_{\nu}:(Y,\mathfrak{T})\rightarrow(Y,\mathfrak{T})$$
is continuous.

(6) $(\prod\limits_{i=1}^{k}v_{i})^{-1}e_{k}\rightarrow0$ in $(Y,\mathfrak{T})$ as $k\rightarrow\infty$. Let $V$ be an absolutely convex subset of $Y$ and $V\bigcap l^{n}$ is a neighborhood of 0 in $(l^{n},\|\cdot\|_{n})$ for each $n\geqslant1$. We will show that there is some positive integer $k_{0}$ such that $(\prod\limits_{i=1}^{k}v_{i})^{-1}e_{k}\in V$ whenever $k>k_{0}$. Since $\lim\limits_{k\rightarrow\infty}\prod\limits_{i=1}^{k}|v_{i}|=+\infty$, the sequence $((\prod\limits_{i=1}^{k}v_{i})^{-1}e_{k})_{k=1}^{\infty}$ converges to 0 in $(l^{1},\|\cdot\|_{1})$, and hence there is some positive integer $k_{0}$ such that $(\prod\limits_{i=1}^{k}v_{i})^{-1}e_{k}\in V\bigcap l^{1}\subseteq V$ whenever $k>k_{0}$. Thus $(\prod\limits_{i=1}^{k}v_{i})^{-1}e_{k}\rightarrow0$ in $(Y,\mathfrak{T})$ as $k\rightarrow\infty$.

By Theorem 1.4 we have the unilateral weighted backward shift
$$T_{\nu}:(Y,\mathfrak{T})\rightarrow(Y,\mathfrak{T})$$
is mixing.
\end{example}

\begin{flushright}
  $\Box$
\end{flushright}

\section{In the case of ultrabarrelled spaces} \label{S-mixing ultrabarrelled}
In this section we will characterize mixing unilateral backward shifts on ultrabarrelled topological sequence spaces.

First we will state and prove an analogue of the Banach-Steinhaus Theorem 2.2 for $X$ ultrabarrelled and $Y$ a topological vector space.

We need the following Theorem which gives the most general construction of a vector topology, see \cite[page 45, Theorem 4.3.5]{Wilansky78}.

\begin{theorem}[\cite{Wilansky78}]
Let $X$ be a vector space. Let $\mathfrak{B}$ be a nonempty collection of nonempty subsets of $X$ such that $\mathfrak{B}$ satisfies the following properties:

(1) Each member of $\mathfrak{B}$ is balanced and absorbing.

(2) If $U\in\mathfrak{B}$, there exists $V\in\mathfrak{B}$ such that $V+V\subseteq U$.

(3) If $U_{1}$ and $U_{2}$ are in $\mathfrak{B}$, there exists $U_{3}\in\mathfrak{B}$ such that $U_{3}\subseteq U_{1}\bigcap U_{2}$.

Then there is a unique vector topology for $X$ for which $\mathfrak{B}$ is a neighborhood base at 0.
\end{theorem}

\begin{theorem}
If $\mathfrak{F}$ is a collection of continuous linear maps from an ultrabarrelled $(X,\mathfrak{T})$ into a topological vector space $Y$, and if the set
$$\mathfrak{F}(\mu)=\{f(\mu):f\in\mathfrak{F}\}$$
is bounded in $Y$ for every $\mu\in X$, then $\mathfrak{F}$ is equicontinuous.
\end{theorem}

\begin{proof}
Let $V$ be a neighborhood of 0 in $Y$. We will show that there exists a neighborhood $U$ of 0 in $(X,\mathfrak{T})$ such that $f(U)\subseteq V$
for each $f\in\mathfrak{F}$. Since every neighborhood of 0 in a topological vector space includes a closed balanced neighborhood of 0 (see \cite[page 41, Theorem 4.2.7]{Wilansky78}), we may choose a closed balanced neighborhood $W$ of 0 in $Y$ such that $W\subseteq V$. Next we will show that there exists a neighborhood $U$ of 0 in $(X,\mathfrak{T})$ such that $f(U)\subseteq W$ for each $f\in\mathfrak{F}$. Let $U=\bigcap\limits_{f\in\mathfrak{F}}f^{-1}(W)$. Then $f(U)\subseteq W$ for each $f\in\mathfrak{F}$. We claim that $U$ is a neighborhood of 0 in $(X,\mathfrak{T})$. Let $$\mathfrak{B}=\{\bigcap\limits_{f\in\mathfrak{F}}f^{-1}(P):P\text{ is a closed balanced neighborhood of 0 in }Y\}.$$
Then $\mathfrak{B}$ satisfies the following properties:

(1) Each member of $\mathfrak{B}$ is closed in $(X,\mathfrak{T})$.

Let $P$ be a closed balanced neighborhood of 0 in $Y$. Since each $f\in\mathfrak{F}$ is continuous, $f^{-1}(P)$ is closed in $(X,\mathfrak{T})$. Thus $\bigcap\limits_{f\in \mathfrak{F}}f^{-1}(P)$ is closed in $(X,\mathfrak{T})$.

(2) Each member of $\mathfrak{B}$ is balanced and absorbing.

Let $P$ be a closed balanced neighborhood of 0 in $Y$. Since $P$ is balanced, $\bigcap\limits_{f\in\mathfrak{F}}f^{-1}(P)$ is balanced. Next we will show that $\bigcap\limits_{f\in \mathfrak{F}}f^{-1}(P)$ is absorbing. For every $\mu\in X$, since the set $\mathfrak{F}(\mu)=\{f(\mu):f\in\mathfrak{F}\}$ is bounded in $Y$, there exists $\varepsilon>0$ such that $t\mathfrak{F}(\mu)\subseteq P$ whenever $|t|<\varepsilon$. Hence $t\mu\in\bigcap\limits_{f\in\mathfrak{F}}f^{-1}(P)$ for $|t|<\varepsilon$, this implies that $\bigcap\limits_{f\in \mathfrak{F}}f^{-1}(P)$ is absorbing.

(3) Let $P$ be a closed balanced neighborhood of 0 in $Y$. We will show that there exists a closed balanced neighborhood $Q$ of 0 in $Y$ such that $\bigcap\limits_{f\in \mathfrak{F}}f^{-1}(Q)+\bigcap\limits_{f\in\mathfrak{F}}f^{-1}(Q)\subseteq\bigcap\limits_{f\in\mathfrak{F}}f^{-1}(P)$. Since $Y$ is a topological vector space, addition of vectors is a continuous operation from $Y\times Y$ into $Y$. Hence there is a neighborhood $P_{1}$ of 0 in $Y$ such that $P_{1}+P_{1}\subseteq P$. Since every neighborhood of 0 in a topological vector space includes a closed balanced neighborhood of 0 (see \cite[page 41, Theorem 4.2.7]{Wilansky78}), we may choose a closed balanced neighborhood $Q$ of 0 in $Y$ such that $Q\subseteq P_{1}$. Thus $Q+Q\subseteq P$ and $\bigcap\limits_{f\in\mathfrak{F}}f^{-1}(Q)+\bigcap\limits_{f\in\mathfrak{F}}f^{-1}(Q)\subseteq\bigcap\limits_{f\in\mathfrak{F}}f^{-1}(P)$.

(4) Let $P,Q$ be two closed balanced neighborhoods of 0 in $Y$. Then $P\bigcap Q$ is also a closed balanced neighborhood of 0 in $Y$ such that $(\bigcap\limits_{f\in \mathfrak{F}}f^{-1}(P))\bigcap(\bigcap\limits_{f\in\mathfrak{F}}f^{-1}(Q))=\bigcap\limits_{f\in\mathfrak{F}}f^{-1}(P\bigcap Q)\in\mathfrak{B}$.

By Theorem 3.1, there is a unique vector topology $\mathfrak{T}^{'}$ for $X$ for which $\mathfrak{B}$ is a neighborhood base at 0. Clearly $\mathfrak{T}^{'}$ is $F$ linked to $\mathfrak{T}$. Since $(X,\mathfrak{T})$ is ultrabarrelled, we have $\mathfrak{T}^{'}\subseteq\mathfrak{T}$. Notice that $U=\bigcap\limits_{f\in \mathfrak{F}}f^{-1}(W)\in\mathfrak{B}$. Hence
$U$ is a neighborhood of 0 in $(X,\mathfrak{T})$. This proves our claim.
\end{proof}

Next we start by studying the unweighted unilateral backward shifts on ultrabarrelled topological sequence spaces. Our results will then extend immediately to all weighted unilateral backward shifts on ultrabarrelled topological sequence spaces via a simple conjugacy.

\begin{proposition}
Let $X$ be an ultrabarrelled topological sequence space in which $(e_{n})_{n=1}^{\infty}$ form a basis. Suppose that the unweighted unilateral backward shift $T$ is a mixing linear operator on $X$, where
$$T(u_{1},u_{2},u_{3},\cdots)=(u_{2},u_{3},u_{4},\cdots).$$
Then $\lim\limits_{n\rightarrow\infty}e_{n}=0$.
\end{proposition}

The proof of Proposition 3.3 is identical to that in Proposition 2.7 if one applies Theorem 3.2 to the continuous linear operators $f_{n},n\geqslant1$, where $f_{n}(\mu)=u_{n}e_{n} (\forall\mu=(u_{k})_{k=1}^{\infty}\in X)$. Hence we omit its proof.

Using Propositions 2.4 and 3.3 we immediately get Theorem 1.5.

Grosse-Erdmann and Peris \cite[Theorem 4.5]{Grosse-Erdmann-Peris} proved Theorem 1.5 in the case of Fr$\acute{e}$chet sequence spaces, thus Theorem 1.5 generalizes Theorem 1.1.

Using the conjugacy in Theorem 1.4 we get Theorem 1.6.

The proof of Theorem 1.6 is identical to that in Theorem 1.4 if one uses the conjugacy in Theorem 1.4 and Theorem 1.5, and hence we omit its proof.

Grosse-Erdmann and Peris \cite[Theorem 4.8 (b)]{Grosse-Erdmann-Peris} proved Theorem 1.6 in the case of Fr$\acute{e}$chet sequence spaces, thus Theorem 1.6 generalizes Theorem 1.2.

\begin{example}
Let $0<p<1$. The space $l^{p}$ is defined to consist of all sequences $\mu=(u_{k})_{k=1}^{\infty}$ of scalars such that $\sum\limits_{k=1}^{\infty}|u_{k}|^{p}<+\infty$. The metric in $l^{p}$ is defined by
$$d(\mu,\nu)=\sum\limits_{k=1}^{\infty}|u_{k}-v_{k}|^{p} (\forall\mu=(u_{k})_{k=1}^{\infty},\nu=(v_{k})_{k=1}^{\infty}\in l^{p}).$$

In this example we will show that the unilateral weighted backward shift
$$T_{\nu}:(l^{p},d)\rightarrow(l^{p},d)$$
is mixing, where $\nu=(v_{k})_{k=1}^{\infty}$ is a bounded sequence of nonzero scalars with
$$\lim\limits_{k\rightarrow\infty}\prod\limits_{i=1}^{k}|v_{i}|=+\infty.$$

It is obvious that the ordered pair $(l^{p},d)$ is a F-sequence space and ultrabarrelled. Since $(l^{p},d)$ is not locally convex (see \cite[page 91, Example 7.1.1]{Wilansky78}), $(l^{p},d)$ is not a Fr$\acute{e}$chet sequence space or barrelled. Furthermore $(e_{k})_{k=1}^{\infty}$ form a basis for $(l^{p},d)$.
Clearly the unilateral weighted backward shift
$$T_{\nu}:(l^{p},d)\rightarrow(l^{p},d)$$
is continuous. Since $\lim\limits_{k\rightarrow\infty}\prod\limits_{i=1}^{k}|v_{i}|=+\infty$, $(\prod\limits_{i=1}^{k}v_{i})^{-1}e_{k}\rightarrow0$ in $(l^{p},d)$ as $k\rightarrow\infty$. By Theorem 1.6 we have the unilateral weighted backward shift $$T_{\nu}:(l^{p},d)\rightarrow(l^{p},d)$$
is mixing.
\end{example}

\begin{flushright}
  $\Box$
\end{flushright}

In Example 3.4 we show that $(l^{p},d)$ is not a Fr$\acute{e}$chet sequence space or barrelled, so we cannot apply Theorem 1.2 or Theorem 1.4 to the unilateral weighted backward shift $T_{\nu}:(l^{p},d)\rightarrow(l^{p},d)$ to check the mixing behaviour.
%%%%%%%%%%%%%%%%%%%%%%%%%%%%%%%%%%%%%%%%%%%%%%%%%%%%%%%%%%%%%%%%%%%%%%%%%%%%%%%%%%%%%%%%%%%%%%%%%%%%%%%%%%%%%%%%%%%%%%%%%%%

\end{document}